\newcommand{\bbm}{\left(\begin{matrix}}
\newcommand{\ebm}{\end{matrix}\right)}
\newcommand{\beq}{\begin{eqnarray}}
\newcommand{\eeq}{\end{eqnarray}}
\newtheorem{prop}{Proposition}
\newtheorem{theorem}[prop]{Theorem}
\newtheorem{lemma}[prop]{Lemma}
\newtheorem{cor}[prop]{Corollary}
\theoremstyle{definition}
\newtheorem{rem}[prop]{Remark}
\newcommand{\be}{\begin{equation}}
\newcommand{\ee}{\end{equation}}
\newcommand{\beqa}{\begin{eqnarray}}
\newcommand{\eeqa}{\end{eqnarray}} 
 \def \bea{\begin{eqnarray}} \def\eea{\end{eqnarray}}
\newcommand{\barr}{\begin{array}}
\newcommand{\earr}{\end{array}}
 \def\one{\mbox{1 \kern-.59em {\rm l}}}
\def\bit{\begin{itemize}} \def\eit{\end{itemize}}
\def\({\left(} \def\){\right)}
\sloppy \allowdisplaybreaks[3]
\begin{document}




\renewcommand{\title}[1]{\vspace{10mm}\noindent{\Large{\bf

#1}}\vspace{8mm}} \newcommand{\authors}[1]{\noindent{\large

#1}\vspace{5mm}} \newcommand{\address}[1]{{\itshape #1\vspace{2mm}}}

\newcommand{\new}[1]{{\color{red}#1}}

\begin{center}
\vskip -9mm
\title{{\Large Transverse generalized metrics and 2d sigma models}} 
\vskip -3mm
\authors{Pavol \v Severa$~^{\sharp}$
 and Thomas Strobl$~^{\star}$}
\vskip -1mm
\address{
$^{\sharp}$Section of Mathematics, Universit\'e de Gen\`eve, Geneva, Switzerland\\
$^{\star}$Institut Camille Jordan, Universit\'e Claude Bernard Lyon 1, Villeurbanne, France}
\vskip 1mm
\textbf{Abstract}
\vskip 2mm
\begin{minipage}{15cm}%
We reformulate the compatibility condition between a generalized metric and a small (non-maximal rank) Dirac structure in an exact Courant algebroid found in the context of the gauging of strings and formulated by means of two connections in purely Dirac-geometric terms. The resulting notion, a transverse generalized metric, is also what is needed for the dynamics on the reduced phase space of a string theory.
\end{minipage}
\end{center}
\noindent {\bf 1.} Let $E$ be an exact Courant algebroid over $M$, characterized by the class $[H] \in H^3_{dR}(M)$ \cite{letters,3form} and let $V \subset E$ be a \emph{generalized metric}, i.e.\ a positive definite, rank $n=\dim M$ subbundle of $E$.\footnote{See also, e.g., the first part of Section 3 in \cite{DSM} for a concise review of the required notions or Sections 2 and 3 of \cite{Cortes} for a review with further details.}
These data are equivalent to the choice of a Riemannian metric $g$ and a representative closed 3-form $H$ on $M$ (since there is a unique splitting of $\rho\colon E \to TM$
such that $V$ can be written as the graph of a symmetric 2-tensor). They are also the data  needed on the target space for the definition of a standard sigma model with Wess-Zumino term. Choose a \emph{small Dirac structure}, i.e.\ an involutive, isotropic $C^\infty(M)$-submodule ${\cal D}$ of $\Gamma(E)$. In this note we only consider regular $\cal D$'s, i.e.\ those of the form ${\cal D}=\Gamma(D)$ for some sub-vector bundle $D \subset E$.

We call a rank $n$ subbundle $W \subset E$ a pre-$D$-transverse generalized metric if $D \subset W \subset D^\perp$ and $\langle w,w\rangle>0$ for every $w \in W$ with $w \not \in D$. This becomes   a $D$-transverse generalized metric, or simply  a \emph{transverse generalized metric}, if in addition the invariance property 
\be [\Gamma(D),\Gamma(W)] \subset \Gamma(W) \,  \label{inv}
\ee 
holds true. 

\vspace{2mm}
\noindent {\bf 2.} If $D$ is such that $\rho|_D \colon D\to TM$ is injective (in which case we call $D$ \emph{projectable}), then a $D$-transverse generalized metric is equivalent to a Riemannian metric  and a closed 3-form, both on the space of leaves of the resulting foliation  $F:=\rho(D)\subset TM$. In more detail, we have:
\begin{prop} Suppose that the leaves of the foliation $F=\rho(D)$ generated by a projectable small Dirac structure $D$ are the fibers of a surjective submersion $\pi\colon M\to Q$. If $W\subset E$ is a $D$-transverse generalized metric, then there is  a unique splitting $E\cong(T\oplus T^*)M$ such that the resulting 3-form is of the form $\pi^*H_Q$ and $W$ is the graph of $\pi^*g_Q$, where, respectively, $H_Q$ and  $g_Q$ are a closed 3-form and a Riemannian metric  on $Q$.
\end{prop}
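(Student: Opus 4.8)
The plan is to reach the claimed splitting by two successive $B$-field shifts. First I would change the splitting so that $D$ lies inside $TM$; then the invariance condition \eqref{inv} forces the ``metric part'' of $W$ to be the pullback of a metric on $Q$; and a final shift by a semibasic $2$-form normalises $W$ to a pure symmetric tensor and exhibits the $3$-form as a pullback. Concretely: since $D$ is projectable, $D\cap\ker\rho=0$ and $\rho|_D$ identifies $D$ with the integrable distribution $F=\rho(D)$ tangent to the fibres of $\pi$. In any isotropic splitting $E\cong(T\oplus T^*)M$, $D$ is the graph over $F$ of a map $\theta\colon F\to T^*M$ that is antisymmetric on $F$ (isotropy of $D$); extending $\theta$ to a global $2$-form $B_0$ with $\iota_v B_0=\theta(v)$ for $v\in\Gamma(F)$ (pick a complement of $F$ in $TM$) and shifting the splitting by $B_0$, I may assume $D=F\subset TM\subset E$. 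Fix such a splitting, with closed $3$-form $H$.

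\textbf{Shape of $W$ and use of \eqref{inv}.} On $W$ the pairing is positive semidefinite with kernel exactly $D$ (the inclusions $D\subset W\subset D^\perp$ put $D$ into the kernel, and $\langle w,w\rangle>0$ for $w\notin D$ gives equality), so $W\cap T^*M\subset D\cap T^*M=0$ and $W$ is the graph of a bundle map $\sigma\colon TM\to T^*M$; write $\sigma=g+b$, $g$ symmetric, $b$ antisymmetric. From $D=F\subset W$ one gets $\sigma(v,\cdot)=0$ for $v\in\Gamma(F)$, so $\iota_F g=\iota_F b=0$ and $g$ is positive semidefinite with $\ker g=F$ (the inclusion $W\subset D^\perp$ adds nothing). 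Since the Dorfman bracket satisfies $[d,fw]=f[d,w]+(\rho(d)f)w$ and $[fd,w]=f[d,w]-(\rho(w)f)d$ on $\Gamma(D)\times\Gamma(W)$ (using $\langle d,w\rangle=0$, $d\in W$), \eqref{inv} need only be tested on $v\in\Gamma(F)$ and $X+\sigma(X,\cdot)$. Using the $H$-twisted Dorfman bracket,
\[
[v,\,X+\sigma(X,\cdot)]_H=[v,X]+\mathcal L_v\big(\sigma(X,\cdot)\big)+\iota_X\iota_v H ,
\]
so requiring the $T^*M$-part to equal $\sigma([v,X],\cdot)$ gives the identity $(\mathcal L_v\sigma)(X,Y)=-H(v,X,Y)$ for all $X,Y$ and $v\in\Gamma(F)$; separating symmetric and antisymmetric parts in $X,Y$ yields $\mathcal L_v g=0$ and $\mathcal L_v b=-\iota_v H$.

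\textbf{Descent, final shift, uniqueness.} Now $g$ is a positive semidefinite symmetric tensor, semibasic with $\ker g=F$ and invariant along $F$, hence $g=\pi^*g_Q$ for a unique Riemannian metric $g_Q$ on $Q$. Because $\iota_F b=0$, shifting the splitting once more by $B:=b$ keeps $D=F\subset TM$, turns $W$ into the graph of the symmetric tensor $\pi^*g_Q$, and replaces $H$ by the closed form $H'=H+db$; re-running the previous computation (now $b=0$) gives $\iota_v H'=0$ for $v\in\Gamma(F)$, whence $\mathcal L_v H'=\iota_v dH'+d\iota_v H'=0$, so $H'=\pi^*H_Q$ with $H_Q$ closed ($\pi^*$ being injective on forms). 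This is the desired splitting. For uniqueness: any splitting with the stated properties has $D\subset W=\{X+\pi^*g_Q(X,\cdot)\}$, which as above forces $D=F\subset TM$; two such splittings then differ by a $2$-form $B$ with $\iota_F B=0$, and under that shift $W$ becomes the graph of $\pi^*g_Q\pm B$, which is purely symmetric only if $B=0$.

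\textbf{Main obstacle.} None of the computations is difficult; the two things to handle carefully are the elementary but indispensable existence of a \emph{smooth} $D$-adapted splitting, and the appeal, in the last paragraph, to the standard fact that an invariant semibasic tensor or form on $M$ descends along $\pi\colon M\to Q$ — this is the only non-formal input.
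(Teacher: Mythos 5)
Your proof is correct and follows essentially the same route as the paper's: the paper simply asserts at the outset the (unique) splitting in which $W$ is the graph of a degenerate symmetric bilinear form $h$ --- which is exactly what your two successive $B$-shifts construct --- and then derives $\mathcal L_X h=0$ and $\iota_X H=0$ from \eqref{inv} and descends, just as you do. One small slip: $\sigma(v,\cdot)=0$ (from $D\subset W$ alone) only yields $\iota_v g=-\iota_v b$, so to separate this into $\iota_F g=\iota_F b=0$ you genuinely need $W\subset D^\perp$ (equivalently, that $D$ lies in the radical of the positive semidefinite pairing on $W$); that inclusion is therefore not redundant at that step, contrary to your parenthetical remark.
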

\begin{proof}
There is a unique splitting identifying $E$ with $(T\oplus T^*)M$ such that $W$ is the graph of a (degenerate) symmetric bilinear form $h$ on $TM$. Using this splitting, one has $D= F= \ker h$.
 
The condition $[\Gamma(D),\Gamma(W)]\subset\Gamma(W)$ means
$$\bigl[(X,0),(u,h(u,\cdot))\bigr]=\bigl([X,u], (\mathcal L_X h)(u,\cdot)+h([X,u],\cdot)+H(X,u,\cdot)\bigr)\in \Gamma(W)$$
for every $X\in\Gamma(F)$ and every vector field $u$, and thus
$$\mathcal L_X h=0,\qquad \iota_X H=0.$$
Together with $\iota_X h=0$, $\mathrm{d}H=0$, and the 
semi-positivity requirement on $W$, these two equations imply that $h$ and $H$ are the pullback of a 
Riemannian metric $g_Q$ and a closed 3-form $H_Q$, respectively.
\end{proof}

\noindent {\bf 3.} 
If $(M,g)$ is a Riemannian manifold and $\pi\colon M\to Q$ a submersion, then $\pi$ is called a Riemannian submersion iff $g$ descends to a Riemannian metric on $Q$. This means the following: for any $m\in M$ we decompose $T_mM$ orthogonally to $T_m^\parallel M\oplus T_m^\perp M$, where $T_m^\parallel M:=\ker d_m\pi$ is the subspace tangent to the fiber of $\pi$. Define $h$ as the unique symmetric bilinear form on $T_mM$ which agrees with $g$  on $T_m^\perp M$ and which gives zero when one of the vectors is in $T_m^\parallel$. Then $\pi$ is a Riemannian submersion iff $h=\pi^*g_Q$ for some Riemannian metric $g_Q$ on $Q$.

 We reformulate this in a language adapted to our purposes: Take $V$ and $W$ to be the graphs of $g$ and $h$ inside the standard Courant algebroid $E_0 := (T\oplus T^*)M$, respectively, and let $D:=\{(u,0) \in TM \oplus T^*M| \pi_* u = 0\}$. One verifies easily that then $W= D \oplus (D^\perp \cap V)$. As we saw above, the relation $h = \pi^*g_Q$ for some $g_Q$ is equivalent to $W$ being a $D$-transverse generalized metric. 

If one, more generally, has a foliation on a Riemannian manifold $(M,g)$ where there is no good quotient manifold $Q$ but where $g$ descends to the local spaces of leaves (i.e.\ to a transverse Riemannian metric), one talks of a (regular) Riemannian foliation \cite{Molino}. 
This motivates the following definition: 

A triple $(E,V,D)$---of an exact Courant algebroid $E$, a generalized metric $V\subset E$, and a small Dirac structure $D\subset E$---is a \emph{Dirac-Riemannian foliation} iff $V_D := D \oplus (D^\perp \cap V)$ is a $D$-transverse generalized metric.

\vspace{2mm}

\noindent {\bf 4.} In their simplest setting, sigma models are variational problems defined on the space of maps $X\colon \Sigma \to M$ from a $d$-dimensional, Lorentzian signature pseudo-Riemannian manifold $(\Sigma,\gamma)$ to a Riemannian manifold $(M,g)$. In the traditional setting of gauging such a theory, one needs to be given a group $G$ acting isometrically on $(M,g)$. The procedure is devised in such a way that when the group acts freely and properly on $M$, then the gauged sigma model is equivalent to the ungauged one with target manifold $M/G$. 

The gauging is achieved by coupling to $\mathrm{Lie}(G)$-valued 1-forms $A$ on $\Sigma$  in such a way that two maps $X_1$ and $X_2$ which differ from one another only by the application of a $\Sigma$-dependent group element are related by a symmetry of the action functional depending on $X$ and $A$. 
If the original model is twisted by a closed $d+1$-form $H$, called a Wess-Zumino term, then gauging in this way requires that $H$ has an equivariantly closed extension  \cite{gaugeability,Figueroa}. 

While every isometric $G$-action equips $(M,g)$ with an---in general only singular---Riemannian foliation, where the leaves are given by the $G$-orbits, not every such a foliation results from a $G$-action. According to \cite{without_symmetries,RMP}\footnote{See also Section 2 
of \cite{AHP} for a possible definition of such a generalized notion of gauging.}, it is not necessary to restrict gauging to eventual isometries of $g$, it is sufficient that  $(M,g)$ defines (a somewhat controlled form of) a singular Riemannian foliation. While   the general theory of such gaugings is not worked out yet in the presence of a Wess-Zumino twist for arbitrary dimensions $d$, it was done so in \cite{AHP} for $d=2$.  Two-dimensional sigma models are somewhat particular since the Hodge dual of a 1-form $A$ is again a 1-form, yielding additional options for the gauging, and they are in general intimately related to Dirac geometry, see, e.g., \cite{letters,AS}.

\vspace{2mm}

\noindent {\bf 5.} Let $V\subset E$ be a generalized metric and $D\subset E$ a small Dirac structure. Let us use the (unique) splitting $E\cong(T\oplus T^*)M$ that turns $V$ into the graph of a Riemann metric. The resulting inclusion  $D\to(T\oplus T^*)M$ then gives us a section $(\rho_D, \alpha_D) \in \Gamma(D^* \otimes(T\oplus T^*)M)$. Let us set $\overline{\rho}_D := \iota_{\rho_D}g \in \Gamma(D^* \otimes T^*M)$. 

In \cite{AHP} it was shown that  a two-dimensional sigma model  with the above data on the target space can be \emph{gauged} with respect to $D$, if $D$ can be equipped with two connections $\nabla^\pm := \nabla \pm \phi$, $\phi \in \Omega^1(\mathrm{End}(D))$, such that $(M,g,H,D)$ or $(E,V,D)$ satisfy the following compatibility conditions: 
\beqa \mathrm{Sym} \left(\overline{\nabla} \overline{\rho}_D - \phi^*(\alpha_D) \right)&=& 0 \, , \label{1}\\
\mathrm{Alt}\left(\overline{\nabla} \alpha_D - \phi^*(\overline{\rho}_D)-\tfrac{1}{2}\iota_{\rho_D}H \right)&=& 0 \, ,
\label{2}
\eeqa
where $\overline{\nabla}$ is the extension of $\nabla$ to $T^*M$ by means of the Levi-Civita connection of $g$, $\mathrm{Sym}$ and $\mathrm{Alt}$ denote the symmetrization and antisymmetrization projections in $T^*M \otimes T^*M$, respectively, and  $\phi^* \in \Omega^1(\mathrm{End}(D^*))$ is the 1-form valued map dual to $\phi$. 

In this case the variational problem can be gauged by extending 
the fields from maps $X \colon \Sigma \to M$ to vector bundle morphisms $a \colon T \Sigma \to D$---thus adding gauge field 1-forms $A \in \Omega^1(\Sigma,X^*D)$. Using the canonical splitting given by the above data, the independent field $A$ gives rise to its projections $A_{TM} \in \Omega^1(\Sigma,X^*TM)$ and $A_{T^*\!M}  \in \Omega^1(\Sigma,X^*T^*\!M)$  to $TM$ and $T^*\!M$, respectively. The gauged variational problem is then described symbolically by \cite{AHP,DSM}
\be S[a] = \int_\Sigma \tfrac{1}{2} \vert \vert \mathrm{d} X - A_{TM} \vert \vert^2  + \langle A_{T^*\!M} \stackrel{\wedge}{,}\mathrm{d} X -  \tfrac{1}{2}A_{TM}\rangle + \int H \, , \label{S}
\ee 
where for every $\nu \in \Omega^1(\Sigma,X^*TM)$ one has $\vert \vert \nu \vert \vert^2 \equiv (X^*\!g)(\nu \stackrel{\wedge}{,}  \ast \nu)$ with $\ast$ denoting the Hodge dual associated to $\gamma$---symbolically, since $S$ is not really a functional due to the Wess-Zumino term, while it still defines a unique variational problem for the field $a$ in the standard manner.

\begin{rem} While the definition of the variational problem of \eqref{S}---its Euler Lagrange equations \emph{and} its gauge equivalence of solutions---does not require the knowledge of  connections $\nabla^\pm$ satisfying the Equations \eqref{1} and  \eqref{2}, the off-shell gauge symmetries of a properly defined (possibly multi-valued) functional \eqref{S} do \cite{AHP}. Thus this applies also to an eventual quantization of $S$.
\end{rem}


\begin{rem} In \cite{AHP} the gauging is described by gauge fields taking values in an almost Lie algebroid $L$, where $L$ maps into a possibly singular small Dirac structure ${\cal D}\subset \Gamma(E)$. Here we displayed the simplified situation with $L=D$ only. While the more general situation is more complicated to describe, it is evidently more flexible and may have advantages even when ${\cal D}=\Gamma(D)$: consider, for example, a metric $g$ on a maximally symmetric target manifold $M$. Then the rank of the isometry Lie algebra $\mathfrak{g}$ is $\frac{n(n+1)}{2}>n$ and  choosing for $L$ the corresponding action Lie algebroid, $L=M \times \mathfrak{g}$, provides a simpler description of the gauge theory than the small Dirac structure it maps to. 
\end{rem}


\vspace{2mm}
\noindent {\bf 6.}
 It is one of the main purposes of the current letter to reformulate the conditions \eqref{1} and \eqref{2} in terms which are intrinsic to Courant algebroids. This is essentially achieved by means of the following statement.  
 \begin{theorem} A triple $(E,V,D)$ is a Dirac-Riemannian foliation \emph{iff}  there exist connections $\nabla^\pm$ such that the compatibility conditions \eqref{1} and \eqref{2} are satisfied.
\end{theorem}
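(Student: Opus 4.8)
The plan is to reduce each half of the equivalence to one and the same explicit tensorial condition on $(E,V,D)$, working throughout in the (unique) splitting $E\cong(T\oplus T^*)M$, with $H$-twisted Courant bracket, in which $V$ is the graph of $g$. First, the ``pre-'' part of being a $D$-transverse generalized metric is automatic for $V_D=D\oplus(D^\perp\cap V)$: since $D$ is isotropic and $V$ positive definite one has $D\cap V=0$, which makes the sum direct and forces $\mathrm{rank}\,V_D=n$; the inclusions $D\subseteq V_D\subseteq D^\perp$ hold by construction; and for $w=d+v$ with $0\neq v\in D^\perp\cap V$ one gets $\langle w,w\rangle=\langle v,v\rangle>0$. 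Hence $(E,V,D)$ being a Dirac--Riemannian foliation is equivalent to the single invariance $[\Gamma(D),\Gamma(V_D)]\subseteq\Gamma(V_D)$. Applying the ad-invariance identity $\rho(s)\langle w,s'\rangle=\langle[s,w],s'\rangle+\langle w,[s,s']\rangle$ with $s,s'\in\Gamma(D)$, $w\in\Gamma(D^\perp)$, and using involutivity of $D$, gives $[\Gamma(D),\Gamma(D^\perp)]\subseteq\Gamma(D^\perp)$ for free, so the invariance collapses to $[\Gamma(D),\Gamma(D^\perp\cap V)]\subseteq\Gamma(V_D)$.

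Next I would make this explicit. Writing $s=(\rho_D(s),\alpha_D(s))\in\Gamma(D)$ and $w=(u,\iota_u g)\in\Gamma(D^\perp\cap V)$, where $u$ ranges over the sub-bundle $K:=\{u\in TM:\iota_u(\alpha_D+\overline{\rho}_D)=0\}\cong D^\perp\cap V$, the twisted bracket yields $[s,w]=\bigl([\rho_D(s),u],\ \iota_{[\rho_D(s),u]}g+\theta\bigr)$ with $\theta=(\mathcal L_{\rho_D(s)}g)(u,\cdot)-\iota_u\mathrm{d}\bigl(\alpha_D(s)\bigr)+\iota_u\iota_{\rho_D(s)}H$. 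Both $\alpha_D-\overline{\rho}_D$ and $\alpha_D+\overline{\rho}_D$ are injective on $D$ (a kernel vector would lie in $D\cap V$, resp.\ $D\cap V^\perp$), and we already know $[s,w]\in\Gamma(D^\perp)$; a short linear-algebra argument then shows that $[s,w]\in\Gamma(V_D)$ precisely when $\theta\in\Gamma(\mathrm{Im}(\alpha_D-\overline{\rho}_D))$, in which case the $D$-part of $[s,w]$ is the unique $\widetilde d\in\Gamma(D)$ with $\theta=\alpha_D(\widetilde d)-\overline{\rho}_D(\widetilde d)$. Dually this reads
$$\bigl(\mathcal L_{\rho_D(s)}g-\mathrm{d}\alpha_D(s)+\iota_{\rho_D(s)}H\bigr)(u,v)=0\qquad\text{for all }s\in\Gamma(D),\ u\in K,\ v\in K',$$
where $K':=\{v:\iota_v(\alpha_D-\overline{\rho}_D)=0\}$; the same computation with $V^\perp$ in place of $V$ produces the negative of this equation, consistently with $V_D/D$ and $(D\oplus(D^\perp\cap V^\perp))/D$ being orthogonal complements in $D^\perp/D$, so that $D$-invariance of one is $D$-invariance of the other. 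One also checks that $(s,u)\mapsto\widetilde d$ obeys the Leibniz rule of a connection in the argument $u\in K$, i.e.\ it is a partial connection on $D$ along $K$ (and symmetrically along $K'$).

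Finally I would match this with the connections. I would extend these two partial connections to genuine connections $\nabla^{\pm}$ on $D$ and put $\nabla=\tfrac12(\nabla^++\nabla^-)$, $\phi=\tfrac12(\nabla^+-\nabla^-)$. Pushing $\theta$ through the Levi-Civita connection converts $\mathcal L_{\rho_D(s)}g$ into the symmetrized and $\mathrm{d}\alpha_D(s)$ into the antisymmetrized part of $\overline{\nabla}$ applied to $\overline{\rho}_D$ resp.\ $\alpha_D$, the residual pure-connection terms being exactly those absorbed into $\widetilde d$, i.e.\ into $\phi^*(\alpha_D)$ and $\phi^*(\overline{\rho}_D)$. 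With the extensions chosen so as to kill the components of \eqref{1}--\eqref{2} not seen by $K$ and $K'$, the $K$-restricted equation above becomes exactly the symmetric equation \eqref{1}, and the $V^\perp$-version becomes exactly the antisymmetric equation \eqref{2} (the solitary $\tfrac12$ and the fact that $\iota_{\rho_D}H$ appears only in \eqref{2} being an artifact of the $H$-twist together with $H$ being already skew). Conversely, if $\nabla^{\pm}$ satisfy \eqref{1}--\eqref{2}, the same identities show that the $\widetilde d$ built from $\nabla^+$ satisfies $\theta=\alpha_D(\widetilde d)-\overline{\rho}_D(\widetilde d)$, hence $[s,w]\in\Gamma(V_D)$ and $(E,V,D)$ is a Dirac--Riemannian foliation.

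I expect the delicate point to be precisely this last matching: commuting $\mathcal L_{\rho_D(s)}g$ and $\mathrm{d}\alpha_D(s)$ past the Levi-Civita connection while tracking the signs, the $\tfrac12$, and the placement of the $\iota_{\rho_D}H$-term; and, above all, checking that the freedom in extending the partial connections off $K$ and $K'$ matches exactly the components of \eqref{1}--\eqref{2} left unconstrained by the foliation (the $\mathrm{Alt}$ of \eqref{1} and the $\mathrm{Sym}$ of \eqref{2}), so that a single pair $(\nabla,\phi)$ can be chosen satisfying \eqref{1} and \eqref{2} simultaneously exactly when the foliation is Dirac--Riemannian. Everything else reduces to linear algebra in $E$ together with the standard identities for the Courant bracket.
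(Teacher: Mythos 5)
Your architecture coincides with the paper's: work in the splitting in which $V$ is the graph of $g$, note that the ``pre-'' conditions on $V_D=D\oplus(D^\perp\cap V)$ are automatic, use ad-invariance and involutivity to reduce \eqref{inv} to $\langle[(X,\alpha),u_+],v_-\rangle=(\mathcal L_Xg+\iota_XH-\mathrm{d}\alpha)(u,v)=0$ for $u\in\mathrm{Ann}(D_+)=K$ and $v\in\mathrm{Ann}(D_-)=K'$ --- this is exactly Lemma~\ref{lemma} --- and then trade the resulting tensorial condition for the existence of connection coefficients. Up to that point your computation is correct, including the identification of $(s,u)\mapsto\widetilde d$ as (minus) a partial $D$-connection along $K$ (it is Leibniz in $s$ and tensorial in $u$, not the other way around as you write, but that is a slip of wording).

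The genuine problem is the final matching, and it surfaces as an internal inconsistency in your own text: you correctly observe that the $V^\perp$-computation is equivalent to the $V$-computation (by ad-invariance, since $\langle u_+,v_-\rangle=0$), yet you also assert that the $V$-half ``becomes exactly'' \eqref{1} and the $V^\perp$-half ``becomes exactly'' \eqref{2}. Both cannot hold, since \eqref{1} and \eqref{2} are not equivalent to one another. The correct bookkeeping is different: condition \eqref{pf1} is, locally, the existence of 1-forms $(\omega^\pm)_a^b$ with $\mathcal L_{X_a}g+\iota_{X_a}H-\mathrm{d}\alpha_a=\beta_b^+\otimes(\omega^+)_a^b-(\omega^-)_a^b\otimes\beta_b^-$, and \eqref{1} and \eqref{2} are precisely the \emph{symmetric} and \emph{antisymmetric} parts of this single identity; each of them mixes $\nabla$ and $\phi$, hence $\nabla^+$ and $\nabla^-$ both. (Your parenthesis about ``the Alt of \eqref{1} and the Sym of \eqref{2}'' being unconstrained also reflects this confusion: those projections are identically zero by construction.) Relatedly, your two partial connections only determine $\nabla^-$ in $K$-directions and $\nabla^+$ in $K'$-directions, and since $K+K'$ need not be all of $TM$ (e.g.\ $K=K'=\rho(D)^{\perp_g}$ when $\alpha_D=0$), ``extending them'' does not by itself produce a pair satisfying \eqref{1}--\eqref{2}; what is needed is the linear-algebra fact that a 2-tensor annihilates $K\otimes K'$ iff it lies in $D_+\otimes T^*M+T^*M\otimes D_-$, applied to the full tensor, followed by a partition of unity for global existence of $\nabla^\pm$. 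You flag this matching as the delicate point; it is, and as written it would not go through.
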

\begin{proof} We use the canonical splitting to identify $E$ with $(T\oplus T^*)M$. We then have
$$V_+:=V=\{(u,\iota_u g):=u_+ \mid u\in TM\},\quad V_-:=V^\perp=\{(u,-\iota_u g):=u_- \mid u\in TM\}$$
where we implicitly defined maps $u \mapsto u_\pm$ from $TM$ to $V_\pm$. Consider in addition the  vector bundle maps $\pi_\pm \colon D \to T^*M , (X,\alpha) \mapsto \alpha \pm \iota_X g$. These are isomorphisms between $D_\pm := \pi_\pm(D) \subset T^*M$ and $D$, since $\iota_X (\alpha \pm \iota_X g) = \pm \vert \vert X\vert \vert^2$ vanishes only for $X=0$. 
We have $$ D^\perp\cap V_\pm= \{ u_\pm \mid u \in \mathrm{Ann}(D_\pm)\}.$$

As $V_D^{\hphantom{D}\perp}=D\oplus( D^\perp\cap V_-)$ and $[\Gamma(D),\Gamma(D)]\subset\Gamma(D)$, Condition \eqref{inv}  for $V_D \equiv D \oplus (D^\perp \cap V_+)$ can be restated as that  for every $(X,\alpha)\in \Gamma(D)$:
\begin{equation}\label{pf2}
\langle [(X,\alpha),u_+],v_-\rangle=0\quad\text{whenever $u_+$ and $v_-$ are in $D^\perp$},
\end{equation}
   i.e.\ whenever $u$ is annihilated by $D_+$ and $v$ by $D_-$
   On the other hand, one computes 
$$ [(X,\alpha),u_+] = ({\cal L}_{X}u, \iota_{({\cal L}_{X}u)}g +\iota_u ({\cal L}_{X}g - \mathrm{d}\alpha + \iota_{X}H)) \, . \label{bracket}
$$ 
giving
$$\langle [(X,\alpha),u_+],v_-\rangle=(\mathcal L_X g + i_X H - \mathrm{d}\alpha)(u,v).$$
This shows
\begin{lemma} \label{lemma} $V_D\equiv D \oplus (D^\perp \cap V)$ is a $D$-transverse generalized metric, \emph{iff} one has for all  $(X,\alpha)\in\Gamma(D)$
\be \label{pf1}\mathcal L_X g +i_X H-d\alpha\in\Gamma(D_+\otimes T^*M+T^*M\otimes D_-) \, .\ee \end{lemma}
Denote by $e_a := (X_a,\alpha_a)$ a local basis of $D$ and let $\beta_a^\pm = \pi_\pm(e_a)$ be the induced bases in $D_\pm$. On a local level, Condition \eqref{pf1}, and thus $V_D$ to be a transverse generalized metric, is  equivalent to the existence of locally defined coefficient 1-forms $({\omega}^\pm)_a^b$ such that\footnote{For the special case $H=\mathrm{d}B$ together with $D= \mathrm{graph}(-B)$, i.e.\ $\alpha=-\iota_XB$ in the description of $D$ above, the left-hand side of the following equation becomes simply $ {\cal L}_{X_a}(g+B)$, thus  reproducing  Equation (2.21) in \cite{JHEP}.}
$$  {\cal L}_{X_a}g + \iota_{X_a}H- \mathrm{d}\alpha_a   =  \beta_b^+ \otimes({\omega}^+)_a^b -(\omega^-)_a^b \otimes \beta_b^- \, .$$
This now is verified to be the local form of the equations \eqref{1} and \eqref{2}, with $({\omega}^\pm)_a^b$ being the connection coefficients of $\nabla^\pm$ in the chosen basis, $\nabla^\pm e_a = ({\omega}^\pm)_a^b \otimes e_b$. The global existence of the connections then follows by a standard argument using a partition of unity.
\end{proof} 
\begin{cor} The sigma model associated to  $(M,g,H)$ or $(E,V)$ on a pseudo-Riemannian 2-manifold $(\Sigma,\gamma)$ can be gauged along a (possibly singular) foliation ${\cal T}\subset \Gamma(TM)$, if there exists a small Dirac structure $D$ covering  ${\cal T}$, $\rho(\Gamma(D))={\cal T}$, which makes $(E,V,D)$  a Dirac-Riemannian foliation.
\end{cor}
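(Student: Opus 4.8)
The plan is to obtain this immediately from the Theorem combined with the gauging result of \cite{AHP} recalled in Section~5; the proof essentially just chains the two.

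First I would unravel the hypothesis: by the definition given in Section~3, saying that $(E,V,D)$ is a \emph{Dirac-Riemannian foliation} means precisely that $V_D := D \oplus (D^\perp \cap V)$ is a $D$-transverse generalized metric. Then I would invoke the Theorem, which states that this is equivalent to the existence of a pair of connections $\nabla^\pm = \nabla \pm \phi$ on $D$ satisfying the compatibility conditions \eqref{1} and \eqref{2}; the sections $(\rho_D,\alpha_D)$ and the tensor $\overline{\rho}_D$ appearing there are defined via the canonical splitting $E \cong (T\oplus T^*)M$ adapted to $V$, exactly as in Section~5. Finally I would apply the result of \cite{AHP} described in Section~5: given the target data $(M,g,H)$---equivalently $(E,V)$---together with such connections on $D$, the two-dimensional sigma model on $(\Sigma,\gamma)$ can be gauged with respect to $D$, the gauged variational problem being the one written symbolically in \eqref{S}.

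It then remains only to match ``gauging with respect to $D$'' with ``gauging along ${\cal T}$'': the infinitesimal gauge transformations deform the embedding field $X\colon\Sigma\to M$ along the directions $\rho(\Gamma(D))\subset\Gamma(TM)$, and the hypothesis $\rho(\Gamma(D))={\cal T}$ says exactly that these span the (possibly singular) foliation ${\cal T}$. Here $D$ need not be projectable and $\rho|_D$ need not have constant rank, which is what allows ${\cal T}$ to be singular. The only step that is not merely a matter of chaining definitions is the last one, the input gauging theorem of \cite{AHP}---the construction of the functional \eqref{S} and, above all, the identification of its off-shell gauge symmetries; that is where the genuine content resides, and in the present note it is taken as given.
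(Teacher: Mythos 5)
Your proposal is correct and matches the paper's (implicit) argument exactly: the Corollary is intended as an immediate consequence of the Theorem---which converts the Dirac-Riemannian foliation condition into the existence of connections $\nabla^\pm$ satisfying \eqref{1} and \eqref{2}---combined with the gauging result of \cite{AHP} recalled in Section~5, which is indeed the only substantive external input. Your remark that ${\cal T}=\rho(\Gamma(D))$ may be singular even for a regular subbundle $D$ (since $\rho|_D$ need not have constant rank) correctly accounts for the ``possibly singular'' qualifier in the statement.
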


\vspace{2mm}
\noindent {\bf 7.}  An exact Courant algebroid $E\to M$ gives rise to an infinite-dimensional symplectic manifold ${\cal M}$ which is the phase space of 2-dimensional sigma models: If a splitting of $E=(T\oplus T^*)M$ is chosen, giving rise to a closed 3-form $H\in\Omega^3(M)$, then we have ${\cal M}=T^*(LM)$ with the standard symplectic 2-form modified by the transgression of $H$. More naturally, ${\cal M}$ is the space of all vector bundle maps $f\colon TS^1\to E$, covering the  base map $f_0\colon S^1\to M$, such that $\rho\circ f\colon TS\to TM$ agrees with the tangent map of $f_0$.

A generalized metric $V\subset E$ then defines a function $\mathcal H_V$ on ${\cal M}$ which is the Hamiltonian of the corresponding two-dimensional sigma model:
$$\mathcal H_V(f)=\frac{1}{2}\int_{S^1}\langle f(\partial_\sigma),R_V f(\partial_\sigma)\rangle\, d\sigma \, ,$$
where $\sigma$ is the coordinate on $S^1$ and $R_V \colon E\to E$ is the reflection with respect to $V$.

A small Dirac structure $D\subset E$ defines the Lie algebra $\mathfrak g_D:=C^\infty(S^1)\otimes\Gamma(D)$ together with a Lie algebra map $\mu^* \colon \mathfrak g_D\to C^\infty({\cal M})$ given by  \cite{AS}
$$(\mu^*(s))(f)=\int_{S^1}\langle s,f\rangle.$$
The reduced phase space ${\cal M}/\!/\mathfrak g_D$ is composed of maps $f\colon TS^1\to D^\perp$, i.e.\ $f$'s in ${\cal M}$ satisfying $(\mu^*(s))(f)=0$ for all $s\in\mathfrak g_D$, modulo the action of $\mathfrak g_D$. A $D$-transverse generalized metric $W\subset E$ is then precisely what is needed to provide a Hamiltonian on the reduced phase space ${\cal M}/\!/\mathfrak g_D$. Define
$$\mathcal H_W([f])=\frac{1}{2}\int_{S^1}\langle P f(\partial_\sigma),R_W P f(\partial_\sigma)\rangle\,d\sigma \, ,$$
where $P$ denotes the natural projection from $D^\perp$ to $D^\perp/D$, and $R_W \colon D^\perp/D\to D^\perp/D$ is the orthogonal reflection with respect to $W/D$ which corresponds to the transverse generalized metric $W$.

\section*{Acknowledgement}

The work was supported by  the grant MODFLAT of the European Research Council and the NCCR SwissMAP of the Swiss National Science Foundation.

T.S.\ is grateful for the hospitality and the wonderful working conditions in the group of Anton Alekseev during visits when this work was began.


\bibliographystyle{utphys}

\end{document}